\author[C.~Sanna]{Carlo Sanna$^\dagger$}
\thanks{$\dagger\,$C.~Sanna is a member of the INdAM group GNSAGA}
\address{\parbox{\linewidth}{
Politecnico di Torino, Department of Mathematical Sciences\\
Corso Duca degli Abruzzi 24, 10129 Torino, Italy\\[-8pt]}}
\email{carlo.sanna.dev@gmail.com}
\keywords{central binomial coefficient; practical number}
\subjclass[2010]{Primary: 11B65, Secondary: 11N25.}
\title{Practical central binomial coefficients}
\newtheorem{thm}{Theorem}[section]
\newtheorem{cor}{Corollary}[section]
\newtheorem{lem}[thm]{Lemma}
\theoremstyle{remark}
\begin{document}

\begin{abstract}
A \emph{practical number} is a positive integer $n$ such that all positive integers less than $n$ can be written as a sum of distinct divisors of $n$.
Leonetti and Sanna proved that, as $x \to +\infty$, the central binomial coefficient $\binom{2n}{n}$ is a practical number for all positive integers $n \leq x$ but at most $O(x^{0.88097})$ exceptions.
We improve this result by reducing the number of exceptions to $\exp\!\big(C (\log x)^{4/5} \log \log x\big)$, where $C > 0$ is a constant.
\end{abstract}

\maketitle

\section{Introduction}

A \emph{practical number} is a positive integer $n$ such that all positive integers less than $n$ can be written as a sum of distinct divisors of $n$.
Practical numbers were defined by Srinivasan~\cite{MR0027799}, althought they were already used by Fibonacci to decompose rational numbers as sums of unit fractions~\cite[pag.~121]{MR1923794}.
Estimates for the counting function of practical numbers were given by Hausman and Shapiro~\cite{MR752596}, Tenenbaum~\cite{MR860809}, Margenstern~\cite{MR1089787}, Saias~\cite{MR1430008}, and, lastly, Weingartner~\cite{MR3356847}, who proved that the number of practical numbers up to $x$ is asymptotic to $c x / \log x$, as $x \to +\infty$, where $c = 1.33607\dots$~\cite{MR4079398}, settling a conjecture of Margenstern~\cite{MR1089787}.

In analogy with Goldbach's conjecture and prime triplet conjecture, Melfi~\cite{MR1370203} proved that every positive even integer is the sum of two practical numbers, and that there are infinitely many triples $(n, n + 2, n + 4)$ of practical numbers.
Moreover, Melfi~\cite{MR1452391} proved that every Lucas~sequence $(U_n(P,Q))$ satisfying some mild conditions contains infinitely many practical numbers, and Sanna~\cite{MR4003803} showed that $U_n(P,Q)$ is practical for at least $\gg_{P,Q} x / \log x$ positive integers $n \leq x$, as $x \to +\infty$; and asked for a nontrivial upper bound.

Leonetti and Sanna~\cite{MR4017943} studied binomial coefficients that are practical numbers.
They proved that, for fixed $\varepsilon > 0$ and as $x \to +\infty$, all binomial coefficients $\binom{n}{k}$, with $0 \leq k \leq n \leq x$, are practical numbers but at most $O_\varepsilon\!\big(x^{2 - (2^{-1}\!\log 2 - \varepsilon) / \log \log x}\big)$ exceptions.
Furthermore, they showed that the central binomial coefficient $\binom{2n}{n}$ is a practical number for all positive integers $n \leq x$ but at most $O(x^{0.88097})$ exceptions.
In this note, we give the following improvement of the last result.

\begin{thm}\label{thm:main}
For $x \geq 3$ the central binomial coefficient $\binom{2n}{n}$ is a practical number for all positive integers $n \leq x$ but at most $\exp\!\big(C (\log x)^{4/5} \log \log x\big)$ exceptions, where $C > 0$ is a constant.
\end{thm}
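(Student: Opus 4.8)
The plan is to combine the classical characterization of practical numbers with Kummer's theorem. Recall (Stewart, Sierpi\'nski) that $N=\prod_i p_i^{a_i}$ with $p_1<p_2<\cdots$ is practical if and only if $p_1=2$ and $p_{i+1}\le\sigma\!\big(\prod_{j\le i}p_j^{a_j}\big)+1$ for every $i$; in particular $N$ is practical as soon as, for each prime factor $q$, the product $\prod_{p<q}p^{v_p(N)}$ of the smaller prime-power factors is at least $q$. Writing $v_p=v_p\binom{2n}{n}$, the first step is to show that this gap condition is automatic for all large prime factors, so that an exception is forced by the small primes. By Kummer's theorem $v_p$ is the number of carries on adding $n$ to itself in base $p$; for $p>\sqrt{2n}$ this equals $1$ exactly when $p$ lies in one of the blocks $\big(\tfrac{n}{m+1},\tfrac{2n}{2m+1}\big]$ and $0$ otherwise, and every prime in such a block divides $\binom{2n}{n}$.

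Thus each block $\big(\tfrac{n}{m+1},\tfrac{2n}{2m+1}\big]$ is made up entirely of present primes, whose product is $\exp\big(\tfrac{n}{(2m+1)(m+1)}(1+o(1))\big)$. Comparing this with the largest possible prime factor $2n$, one sees that the block immediately below a given height already exceeds $q$ for every prime factor $q\gtrsim\sqrt{2n\log x}$, and a short Chebyshev-type estimate on the full product of the present primes below $q$ extends the covering down to $q\gtrsim\sqrt{2n}$. Consequently no bad gap can occur above $\sqrt{2n}$, and $\binom{2n}{n}$ fails to be practical only if there is a prime factor $q\le\sqrt{2n}$ with $\prod_{p<q}p^{v_p}<q$; that is, only if the part of $\binom{2n}{n}$ built from the primes below some $q\le\sqrt{2n}$ is anomalously small. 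This is the refinement of the reduction underlying the $O(x^{0.88097})$ bound of Leonetti and Sanna.

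Because the cumulative product at $q$ is at least $2^{v_2}=2^{s_2(n)}$, where $s_2(n)$ is the binary digit sum of $n$, such an obstruction forces $s_2(n)$ to be small together with the vanishing of $v_p$ for essentially all odd $p<q$. Passing to a dyadic sub-interval of the resulting gap, an exception yields a block $(w,2w]$ with $w\le\sqrt{2n}$ all of whose primes satisfy $v_p=0$, which by Kummer means that $n$ has all of its base-$p$ digits at most $(p-1)/2$ simultaneously for every prime $p\in(w,2w]$; when the gap sits entirely below a fixed power of $\log x$ one instead reads off directly that $s_2(n)\ll\log\log x$. The heart of the matter is therefore an estimate for the number of $n\le x$ that are digit-restricted in this way in many prime bases at once.

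The hard part will be exactly this count. The naive heuristic multiplies the per-prime densities $x^{-e_p}$, with $e_p=\log\!\big(\tfrac{2p}{p+1}\big)/\log p$, but this presupposes an independence between digit expansions in distinct prime bases that is not available: the Chinese Remainder Theorem only gives room to prescribe digits until the moduli reach $x$, and the maximal per-base saving one can extract, $x^{-e_3}$, caps any such argument at roughly $x^{0.631}$ (the same correlation barrier that makes intersections of base-$3$ and base-$5$ digit-restricted sets hard to bound). I would split at a scale $w\asymp(\log x)^{1/5}$: for the blocks above this scale I would quantify the approximate independence through the large sieve applied to the classes $n\bmod p\le (p-1)/2$, using prime-power sieving moduli to capture the multi-digit saving, while for the blocks below it I would spend the available CRT room on several low digits of each small base and combine this with the binary count $\binom{\log_2 x}{s_2(n)}$, which factors out cleanly since $2$ is coprime to the odd moduli. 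Optimising the cut-off balances the number of primes one can decorrelate against the room they consume and pins the effective constraint at $s_2(n)\lesssim(\log x)^{4/5}$, feeding into a bound of the shape $\binom{\log_2 x}{\asymp(\log x)^{4/5}}=\exp\!\big(O((\log x)^{4/5}\log\log x)\big)$; the exceptions arising from bad gaps that are not of the first kind are sparser and are absorbed by the same estimate, giving the theorem.
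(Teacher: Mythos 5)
Your first two paragraphs (the reduction) are sound in spirit and follow the Leonetti--Sanna route: Stewart's characterization plus Kummer's theorem show that a non-practical $\binom{2n}{n}$ forces a prime factor $q \lesssim \sqrt{2n}$ whose preceding prime-power product $\prod_{p<q}p^{v_p}$ falls short of $q$, which in turn forces $s_2(n)$ to be small and $n$ to be digit-restricted in many prime bases at once. But the theorem is not proved there: everything hinges on counting such multiply digit-restricted $n$, and your last paragraph is a research plan rather than an argument --- and a plan that cannot succeed. You yourself identify the obstruction: CRT-style prescription of digits is capped at a saving of $x^{-\log(3/2)/\log 3}$, i.e.\ a bound near $x^{0.631}$. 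The large sieve does not break this barrier, it is structurally worse: any large-sieve bound has the shape $(x+Q^2)/L$ with $L=\sum_{q\le Q}\prod_{p^k\|q}\big((2p/(p+1))^k-1\big)$, and since $\log\!\big(2p/(p+1)\big)/\log p<1/2$ for every prime $p$, each term satisfies $h(q)<q^{1/2}$, so $L\le Q^{3/2}$ and the bound is at least $x/Q^{3/2}\ge x^{1/4}$ for any admissible $Q\le\sqrt{x}$. The target $\exp\!\big(C(\log x)^{4/5}\log\log x\big)$ is $x^{o(1)}$, so no ``optimisation of the cut-off'' between these two tools can reach it; the asserted conclusion $s_2(n)\lesssim(\log x)^{4/5}$ is pattern-matched to the statement of the theorem, not produced by any mechanism you describe. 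Breaking the correlation barrier for digit restrictions in distinct prime bases is a well-known open problem, and your proposal inherits it in full.

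The paper's proof rests on an ingredient absent from your proposal, which bypasses multi-base correlations entirely: the Erd\H{o}s--Kolesnik theorem on prime power divisors of binomial coefficients (Theorem~\ref{thm:EK}), which via Corollary~\ref{cor:EK} gives a constant $c_3>0$ such that for every $r \le c_3(\log n/\log\log n)^{1/4}$ there is a prime $p \in [n^{1/r},(2n)^{1/r}]$ with $p^r \mid\mid \binom{2n}{n}$. The only digit statistics ever used are in base $2$: with $\varepsilon=(\log x)^{-1/5}$, Lemma~\ref{lem:s2} shows $2^{s_2(n)}>n^{\varepsilon}$ for all $n\le x$ with at most $\exp\!\big(O((\log x)^{4/5}\log\log x)\big)$ exceptions. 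For the remaining $n$ (one may also assume $n>\tfrac12 E$, where $E$ is the exception bound, which is the bootstrap making $r>1/\varepsilon$ admissible since $\log n\gtrsim(\log x)^{4/5}$), the Erd\H{o}s--Kolesnik prime satisfies $p\le(2n)^{1/r}<(2n)^{\varepsilon}<2^{s_2(n)+1}$, so Lemma~\ref{lem:2d} makes $2^{s_2(n)}p^r$ practical; since this divisor of $\binom{2n}{n}$ is at least $2n$, a second application of Lemma~\ref{lem:2d} makes $\binom{2n}{n}$ itself practical. The exponent $4/5$ thus arises from the fourth root in Theorem~\ref{thm:EK}, by balancing $\varepsilon\log x$ against the constraint $1/\varepsilon<r\lesssim(\log n/\log\log n)^{1/4}$ --- solving $1-\alpha/4=\alpha$ gives $\alpha=4/5$ --- and has nothing to do with multi-base digit counts.
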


We remark that (as already pointed out in~\cite{MR4017943}), likely, there are only finitely many positive integers $n$ such that $\binom{2n}{n}$ is not a practical number, but proving so could be out of reach.
In~fact, if $n$ is a power of $2$ whose base $3$ representation does not contain the digit $2$, then $\binom{2n}{n}$ is not a practical number~\cite[Proposition 2.1]{MR4017943}.
However, establishing whether there are finitely or infinitely many such powers of $2$ is an open problem~\cite{MR580438, MR1845703, MR2506687, MR623247}.

\section{Preliminaries}

We need some preliminary results.

\begin{lem}\label{lem:2d}
If $d$ is a practical number and $n$ is a positive integer divisible by $d$ and having all prime factors not exceeding $2d$, then $n$ is a practical number.
\end{lem}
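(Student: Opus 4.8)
The plan is to assemble $n$ from the practical number $d$ by attaching one prime factor at a time, preserving practicality at every stage. I would induct on $\Omega(n/d)$, the number of prime factors of $n/d$ counted with multiplicity. The base case $n = d$ is the hypothesis. For the inductive step, since $n \neq d$ there is a prime $p$ whose exponent $v_p(n)$ in $n$ exceeds its exponent $v_p(d)$ in $d$; set $n' = n/p$. Then $d \mid n'$, because $v_p(n') = v_p(n) - 1 \geq v_p(d)$ while no other valuation changes, and every prime factor of $n'$ also divides $n$, hence is $\leq 2d$. By the inductive hypothesis $n'$ is practical, so everything reduces to passing from $n'$ to $n = n' p$.

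The engine for this last step is the standard building lemma for practical numbers: if $m$ is practical and $q$ is a prime with $q \leq \sigma(m) + 1$, then $mq$ is practical (classical, going back to Stewart, and equivalent via the characterization that $m$ is practical if and only if every integer in $[1, \sigma(m)]$ is a sum of distinct divisors of $m$). To apply it with $m = n'$ and $q = p$, I need $p \leq \sigma(n') + 1$, and two elementary facts supply this. First, $\sigma$ is nondecreasing along divisibility: if $a \mid b$ then the divisors of $a$ form a subset of the divisors of $b$, so $\sigma(a) \leq \sigma(b)$; in particular $\sigma(n') \geq \sigma(d)$. Second, a practical number $d$ satisfies $\sigma(d) \geq 2d - 1$: for $d \geq 2$ the integer $d - 1$ is a sum of distinct divisors of $d$, all necessarily proper (none can equal $d$), whence $d - 1 \leq \sigma(d) - d$, and the bound is trivial for $d = 1$. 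Combining, $\sigma(n') + 1 \geq \sigma(d) + 1 \geq 2d \geq p$, which is exactly what the building lemma requires; it then gives that $n = n' p$ is practical, closing the induction. (The degenerate case $d = 1$, where $n$ is a power of $2$, is absorbed into the same argument.)

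The only nonroutine ingredient is the building lemma, and the delicate point inside it is the case $p \mid m$, which genuinely arises here (for instance $d = 2$, $n = 4$ forces attaching a prime that already divides the current number, so I cannot reorder to avoid it). When $p \nmid m$ the divisors of $mp$ are the disjoint union of the divisors of $m$ and their $p$-multiples, and one writes a target $t \leq \sigma(mp)$ as $a + pb$ with $a, b \in [0, \sigma(m)]$, realized respectively by divisors of $m$ and by $p$ times divisors of $m$; the inequality $p \leq \sigma(m) + 1$ is precisely what forces such $a, b$ to exist. When $p \mid m$ these two families overlap, the clean splitting breaks down, and one must verify directly that the subset sums of the divisors of $mp$ still cover $[1, \sigma(mp)]$. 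I expect this overlap case to be the main obstacle, and I would dispatch it by the usual Stewart-type covering argument (or simply cite it, as it is standard in the practical-numbers literature, including \cite{MR4017943}).
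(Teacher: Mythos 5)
Your argument is correct in substance, but it is not the paper's route: the paper offers no argument at all for this lemma, proving it by citation to \cite[Lemma~2.2]{MR4017943}, where the result rests on the Stewart--Sierpi\'{n}ski structure theorem (their Lemma~2.1: $p_1^{a_1}\cdots p_k^{a_k}$ with $p_1<\cdots<p_k$ prime is practical if and only if $p_1=2$ and $p_i\le\sigma(p_1^{a_1}\cdots p_{i-1}^{a_{i-1}})+1$ for every $i$). Your induction on $\Omega(n/d)$ is a legitimate, more self-contained alternative: the two supporting facts --- that $\sigma$ is nondecreasing along divisibility and that $\sigma(d)\ge 2d-1$ for practical $d$ --- are both proved correctly, and the chain $p\le 2d\le\sigma(d)+1\le\sigma(n')+1$ is exactly the right reduction to the prime-attaching lemma. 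What your approach buys is that it isolates precisely why $2d$ is the correct threshold, without ever invoking the full characterization; what it costs is that all the difficulty is pushed into the building lemma.

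The one place where your plan is off target is the overlap case $p\mid n'$ of that lemma, which you rightly identify as unavoidable (your example $d=2$, $n=4$ is apt). But this case is not settled by ``the usual Stewart-type covering argument'': when $p\mid m$, writing $m=p^s u$ with $p\nmid u$, the divisors of $mp$ are the divisors of $m$ together with the numbers $p^{s+1}e$ with $e\mid u$, so a covering argument would need control over the subset sums of the divisors of $u$ alone, and practicality of $m$ does not directly provide that. The clean fix is the opposite of a covering argument: quote the structure theorem itself. If $m$ is practical and $p\mid m$, then $mp$ satisfies all the Stewart conditions, because each condition involves only $\sigma$ of the product of the smaller prime powers, and raising the exponent of a prime already present can only (weakly) increase those partial products; in particular, in this case no hypothesis $p\le\sigma(m)+1$ is needed at all. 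Alternatively, the building lemma does appear in the literature with no coprimality restriction (indeed in the stronger form: $m$ practical and $1\le k\le\sigma(m)+1$ imply $mk$ practical), so your fallback of citing it is acceptable. With either repair your induction closes and the proof is complete.
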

\begin{proof}
See~\cite[Lemma~2.2]{MR4017943}.
\end{proof}

For every positive integer $n$, let $s_2(n)$ be the number of nonzero binary digits of $n$.

\begin{lem}\label{lem:v2}
For every positive integer $n$, the exponent of $2$ in the prime factorization of $\binom{2n}{n}$ is equal to $s_2(n)$.
\end{lem}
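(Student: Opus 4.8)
The plan is to compute the exponent of $2$ directly via Legendre's formula. Recall that for a prime $p$ and a positive integer $m$, the exponent of $p$ in $m!$ equals $(m - s_p(m))/(p-1)$, where $s_p(m)$ denotes the sum of the base-$p$ digits of $m$. Specializing to $p = 2$, the exponent of $2$ in $m!$ is exactly $m - s_2(m)$, where $s_2$ is the binary digit sum introduced just before the statement.

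Writing $\binom{2n}{n} = (2n)! / (n!)^2$, I would subtract the relevant valuations. This gives that the exponent of $2$ in $\binom{2n}{n}$ equals $\big(2n - s_2(2n)\big) - 2\big(n - s_2(n)\big) = 2\, s_2(n) - s_2(2n)$. The final step is the elementary observation that multiplying by $2$ merely shifts the binary expansion one place to the left, so $n$ and $2n$ have the same number of nonzero binary digits; hence $s_2(2n) = s_2(n)$, and the expression collapses to $s_2(n)$, as claimed.

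An equivalent route is Kummer's theorem, which asserts that the exponent of $2$ in $\binom{2n}{n}$ is the number of carries produced when adding $n$ to itself in base $2$. Tracking the carries, at bit position $i$ one adds the two equal digits $b_i$ of $n$ together with the incoming carry $c_i$; since $2 b_i + c_i$ has outgoing carry equal to $b_i$, a carry is generated precisely at the positions of the nonzero binary digits of $n$, yielding $s_2(n)$ carries in total.

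Neither approach presents a genuine obstacle, since this is a classical identity; the only points requiring care are the bookkeeping steps, namely the use of $s_2(2n) = s_2(n)$ in the first approach and the explicit verification of the carry-propagation claim, rather than its mere assertion, in the second.
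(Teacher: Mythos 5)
Your proposal is correct, and in fact it contains two valid arguments. Your second route is exactly the paper's proof: the paper cites Kummer's theorem (the exponent of $p$ in $\binom{m+n}{n}$ equals the number of carries in the base-$p$ addition $m+n$), sets $m=n$ and $p=2$, and stops there, leaving the carry count implicit; you go further and verify explicitly that adding $n$ to itself in base $2$ produces a carry precisely at each nonzero bit of $n$ (since the outgoing carry of $2b_i + c_i$ is $b_i$), which is a worthwhile detail the paper omits. Your first route, via Legendre's formula $v_2(m!) = m - s_2(m)$ applied to $\binom{2n}{n} = (2n)!/(n!)^2$ together with the shift identity $s_2(2n) = s_2(n)$, is genuinely different from the paper's: it trades the citation of Kummer's theorem for the (equally classical, but more commonly proved from scratch) digit-sum form of Legendre's formula, making the argument self-contained at the cost of a little more bookkeeping. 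Either version would serve the paper equally well, since the lemma is only used to identify the power of $2$ dividing $\binom{2n}{n}$ with $2^{s_2(n)}$.
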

\begin{proof}
A result of Kummer~\cite{MR1578793} says that for every prime number $p$ and for all positive integers $m, n$ the exponent of $p$ in the prime factorization of $\binom{m + n}{n}$ is equal to the number of carries in the addition $m + n$ done in base $p$.
If $m = n$ and $p = 2$ then we get the desired claim.
\end{proof}

\begin{lem}\label{lem:s2}
We have
\begin{equation*}
\#\big\{n \leq x : s_2(n) \leq \varepsilon (\log n / \log 2 + 1)\big\} \leq x^{\left(\tfrac1{\log 2} + o(1)\right) \, \varepsilon \log (1 / \varepsilon)} ,
\end{equation*}
uniformly as $\varepsilon \log x \to +\infty$ and $\varepsilon \to 0^+$.
\end{lem}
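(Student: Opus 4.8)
The plan is to strip the $n$-dependence from the threshold and reduce to a truncated sum of binomial coefficients. Put $L := \log x/\log 2$; every positive integer $n \le x$ has $\log n/\log 2 \le L$, so, since $s_2(n)$ is an integer, the defining inequality $s_2(n) \le \varepsilon(\log n/\log 2 + 1)$ forces $s_2(n) \le \lfloor \varepsilon(L+1)\rfloor =: t$. As moreover $n < 2^{k}$ with $k := \lfloor L\rfloor + 1$, and exactly $\binom{k}{j}$ of the nonnegative integers below $2^{k}$ have $j$ nonzero binary digits, the count $N$ to be estimated satisfies $N \le \sum_{0 \le j \le t}\binom{k}{j}$. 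This trades the sharp $n$-dependent cut for the single largest value $t$; it will be sharp enough, since the integers near $x$, where the two thresholds essentially coincide, already carry the leading-order behaviour.

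Next I would bound the truncated sum. As $\varepsilon \to 0^+$ we eventually have $t \le k/2$, so the summands increase in $j$ and $\sum_{0 \le j \le t}\binom{k}{j} \le (t+1)\binom{k}{t}$. Inserting the elementary inequality $\binom{k}{t} \le (ek/t)^{t}$ and taking natural logarithms gives
\[
\log N \le \log(t+1) + t\log(ek/t).
\]
From $k \le L+1$ and $t \ge \varepsilon(L+1) - 1$ one gets $ek/t \le (2e)/\varepsilon$ once $\varepsilon(L+1)$ is large, so $t\log(ek/t) \le \varepsilon(L+1)\big(\log(1/\varepsilon) + O(1)\big)$.

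Then I would verify that the hypotheses render the remaining terms lower order. Together $\varepsilon\log x \to +\infty$ and $\varepsilon \to 0^+$ force $L \to +\infty$, $t \to +\infty$, and $\log(1/\varepsilon) \to +\infty$; hence $\log(t+1) = o\big(t\log(1/\varepsilon)\big)$, the ratio $(L+1)/L$ tends to $1$, and the $O(1)$ is swallowed by $\log(1/\varepsilon)$. Collecting everything, $\log N \le (1+o(1))\,\varepsilon\log(1/\varepsilon)\,L = \big(\tfrac1{\log 2} + o(1)\big)\varepsilon\log(1/\varepsilon)\log x$, and exponentiating gives precisely the claimed bound.

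The real work lies not in any single inequality but in making the error term uniform throughout the stated two-parameter limit: one must check that every lower-order contribution --- the floors in $t$ and $k$, the scattered $+1$'s, the constant $e$, and the prefactor $t+1$ --- is truly dominated by the main term $\varepsilon\log(1/\varepsilon)\log x$, and it is exactly the joint conditions $\varepsilon \to 0^+$ (so that $\log(1/\varepsilon)$ outgrows the $O(1)$ constants) and $\varepsilon\log x \to +\infty$ (so that $t, L \to \infty$) that guarantee it. An alternative route replaces $\binom{k}{t} \le (ek/t)^{t}$ by the entropy bound $\sum_{j \le \lambda k}\binom{k}{j} \le 2^{H(\lambda)k}$ with $\lambda = t/k \to 0$ and uses $H(\lambda) \sim \lambda\log_2(1/\lambda)$; it returns the same leading constant $1/\log 2$.
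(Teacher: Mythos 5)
Your proof is correct and takes essentially the same route as the paper's: both reduce the count to the number of binary strings of length roughly $\log x/\log 2$ with at most roughly $\varepsilon(\log x/\log 2+1)$ ones, then bound the truncated binomial sum by an estimate of the shape $(e k/t)^{t}$, and finally check that all lower-order terms are uniformly negligible in the stated two-parameter limit. The only cosmetic difference is how the truncated sum is bounded --- you use monotonicity of the terms plus $\binom{k}{t}\leq(ek/t)^{t}$, while the paper uses the exponential-series trick $\sum_{j\leq k}\binom{N}{j}\leq\sum_{j\leq k}N^{j}/j!<(eN/k)^{k}$.
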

\begin{proof}
Put $N := \lfloor \log x / \log 2 + 1 \rfloor$ and $k := \lceil \varepsilon (\log n / \log 2 + 1) \rceil$.
Then
\begin{equation*}
C := \#\big\{n \leq x : s_2(n) \leq \varepsilon (\log n / \log 2 + 1)\big\} \leq \#\big\{n < 2^N : s_2(n) \leq k \big\} ,
\end{equation*}
where the right-hand side is the number of binary strings of length $N$ having at most $k$ nonzero bits (including $n=0$ to the count).
Therefore,
\begin{align*}
C &\leq \sum_{j \,=\, 0}^k \binom{N}{j} \leq \sum_{j \,=\, 0}^k \frac{N^j}{j!} = \sum_{j \,=\, 0}^k \frac{k^j}{j!}\left(\frac{N}{k}\right)^j < \left(\frac{eN}{k}\right)^k < e^{(1-\log \varepsilon)(\varepsilon(\log x / \log 2 + 1) + 1)} ,
\end{align*}
and the claim follows recalling that $\varepsilon \log x \to +\infty$ and $\varepsilon \to 0^+$.
\end{proof}

The following result of Erd\H{o}s and Kolesnik is the key to the proof of Theorem~\ref{thm:main}.

\begin{thm}\label{thm:EK}
There exist constants $c_1, c_2 > 0$ such that, for all integers $m, n, r$ with
\begin{equation*}
2 \leq m \leq n / 2 \quad \text{ and } \quad 1 \leq r \leq c_1\!\left(\frac{(\log m)^3}{(\log n)^2 \log \log n}\right)^{1/4} ,
\end{equation*}
there exist at least $c_2 r m^{1/r} / (4^r \log m)$ prime numbers $p \in {[m^{1/r}, n^{1/r}]}$ such that $p^r \mid\mid {\textstyle\binom{n}{m}}$.
\end{thm}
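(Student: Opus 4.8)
The plan is to translate the exact-divisibility condition into a statement about carries and then count primes for which the right number of carries occurs. By Legendre's formula the $p$-adic valuation is $v_p\!\binom{n}{m} = \sum_{i\ge1}\delta_i$, where $\delta_i := \lfloor n/p^i\rfloor - \lfloor m/p^i\rfloor - \lfloor(n-m)/p^i\rfloor \in\{0,1\}$ records a carry at level $i$ in the base-$p$ addition $m+(n-m)=n$ (this is the content of Kummer's theorem, cf.\ Lemma~\ref{lem:v2}). A one-line computation with fractional parts shows that $\delta_i=1$ exactly when $\{m/p^i\}>\{n/p^i\}$, so that
\begin{equation*}
v_p\!\binom{n}{m} = \#\big\{i\ge1 : \{m/p^i\}>\{n/p^i\}\big\}.
\end{equation*}

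Next I would discard the part of the range where too many carries are possible by restricting $p$ to the subinterval $J:=\big(\max(m^{1/r},\,n^{1/(r+1)}),\,n^{1/r}\big]$ of $[m^{1/r},n^{1/r}]$; since $J\subseteq[m^{1/r},n^{1/r}]$, it suffices to produce enough good primes in $J$. For $p\in J$ one has $m\le p^r$ and $n<p^{r+1}$, so $m$ has at most $r$ and $n$ exactly $r+1$ base-$p$ digits; hence $\{m/p^i\}=m/p^i<n/p^i=\{n/p^i\}$ for every $i\ge r+1$, i.e.\ no carry occurs beyond level $r$ and $v_p\!\binom{n}{m}\le r$. On $J$, therefore, $p^r\mid\mid\binom{n}{m}$ is equivalent to the simultaneous occurrence of all $r$ carries,
\begin{equation*}
\{m/p^i\} > \{n/p^i\} \qquad (i=1,\dots,r),
\end{equation*}
and the task reduces to bounding below the number of primes $p\in J$ satisfying these $r$ inequalities.

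The heart of the matter is this lower bound. The heuristic is that, as $p$ ranges over $J$, the points $(\{m/p^i\},\{n/p^i\})$ equidistribute in $[0,1]^2$, so each inequality holds for a proportion of $p$ bounded away from $0$; moreover carries tend to propagate, so the $r$ events are positively correlated and all hold simultaneously for a proportion $\gg c^{-r}$ with a base $c$ that can be kept strictly below $4$. To make this quantitative I would partition $J$ into the maximal subintervals on which all the integer parts $\lfloor n/p^i\rfloor$ and $\lfloor m/p^i\rfloor$ ($1\le i\le r$) are constant; on each such piece every condition $\{m/p^i\}>\{n/p^i\}$ becomes an explicit inequality in $p$, and one estimates the length of the set of $p$ meeting all $r$ of them. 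Summing over the pieces yields a lower bound of order $c^{-r}|J|$ for the measure of the good set, and, since $|J|\gg m^{1/r}/r$ throughout the admissible range, dividing by $\log(m^{1/r})=(\log m)/r$ to pass from measure to primes gives a count $\gg c^{-r}m^{1/r}/\log m$. As $c<4$, this already dominates the asserted $c_2\,r\,m^{1/r}/(4^r\log m)$ (the case $r=1$, where the single condition reduces to $p\in(n-m,n]$, exhibits the scale $m^{1/r}=m$).

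The main obstacle is the passage from the measure estimate to an honest count of primes, uniformly over the many short pieces of the partition. One needs a Brun--Titchmarsh upper bound to control primes in the discarded intervals together with a lower-bound sieve (or a prime-number-theorem input) on the good intervals, and the errors from all pieces must be summed. Because the main term decays exponentially in $r$, this accumulated error has to be held far below it: the number of pieces and the sizes $p^i\le n$ contribute powers of $\log n$ and a factor $\log\log n$, while the requirement that the exponentially small main term still dominate caps the size of $r$. Balancing the two is exactly what yields the admissible range $r\le c_1\big((\log m)^3/((\log n)^2\log\log n)\big)^{1/4}$; carrying out this balance, and in particular quantifying the positive correlation among the $r$ carry-conditions so as not to lose more than a base-$c$ factor with $c<4$, is the technically demanding part.
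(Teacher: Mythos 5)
The paper does not actually prove this statement: its ``proof'' is a citation to Erd\H{o}s and Kolesnik \cite{MR1692284}, of which the theorem is a verbatim quotation. Your opening reduction is correct and matches the natural first steps of any proof: the identity $v_p\binom{n}{m}=\#\{i\ge 1:\{m/p^i\}>\{n/p^i\}\}$ is right, and on your interval $J=(\max(m^{1/r},n^{1/(r+1)}),\,n^{1/r}]$ the condition $p^r\mid\mid\binom{n}{m}$ is indeed equivalent to the $r$ simultaneous inequalities $\{m/p^i\}>\{n/p^i\}$ for $1\le i\le r$, since $p^i>n$ kills all higher carries; the bound $|J|\gg m^{1/r}/r$ also checks out in the admissible range for $r$.

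However, everything after this reduction is asserted rather than proved, and the two deferred assertions \emph{are} the theorem. First, the density bound: you claim the good set has measure $\gg c^{-r}|J|$ with $c<4$ by ``positive correlation of carries,'' but give no argument. On a piece where $\lfloor m/p^i\rfloor=a_i$ and $\lfloor n/p^i\rfloor=b_i$ are constant, the $i$-th condition reads $(b_i-a_i)p^i>n-m$, which fails outright whenever $b_i=a_i$; whether the surviving configurations of $(a_i,b_i)_{i\le r}$ contribute total measure exponentially comparable to $|J|$ is a genuine combinatorial problem, and nothing in your sketch excludes severe cancellation. Note also that an independent-carries heuristic gives base $2$, while the theorem's statement loses to base $4$ with only a linear factor $r$ recovered --- so your claim of base $c<4$ is \emph{stronger} than what the cited proof achieves, which should itself be a red flag that the correlation argument is not free. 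Second, the passage from measure to primes: the good set is a union of a great many intervals, individually far too short for any unconditional lower bound on primes --- lower-bound sieves cannot detect primes, and Brun--Titchmarsh only bounds from above --- so one must aggregate the good intervals and apply the prime number theorem with its classical error term to the aggregate. It is exactly this step that forces the peculiar range $r\le c_1\big((\log m)^3/((\log n)^2\log\log n)\big)^{1/4}$: the fourth root and the $\log\log n$ emerge from optimizing the number and lengths of the pieces against the PNT error term, a computation you invoke (``balancing the two is exactly what yields the admissible range'') but never perform. As it stands the proposal is a correct reduction plus a plausible roadmap with its center missing; completing it would amount to reconstructing the Erd\H{o}s--Kolesnik argument, which is why the paper simply cites it.
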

\begin{proof}
See~\cite[Theorem~2]{MR1692284}.
\end{proof}

\begin{cor}\label{cor:EK}
There exists a constant $c_3 > 0$ such that, for all integers $n, r$ with
\begin{equation*}
n \geq 3 \quad \text{ and } \quad 1 \leq r \leq c_3\!\left(\frac{\log n}{\log \log n}\right)^{1/4} ,
\end{equation*}
there exists a prime number $p \in {[n^{1/r}, (2n)^{1/r}]}$ such that $p^r \mid\mid \binom{2n}{n}$.
\end{cor}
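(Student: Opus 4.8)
The plan is to deduce the corollary from Theorem~\ref{thm:EK} applied to the central binomial coefficient, that is, with the pair $(N,M)=(2n,n)$, treating the trivial case $r=1$ separately so that no smallness issue interferes with the application of Theorem~\ref{thm:EK}.

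First I would dispose of $r=1$ by Bertrand's postulate, which is uniform in $n$. For $n\ge 2$ there is a prime $p$ with $n<p<2n$; for such a prime the exponent of $p$ in $(2n)!$ equals $\lfloor 2n/p\rfloor=1$ (the terms $\lfloor 2n/p^k\rfloor$ with $k\ge 2$ vanish because $p^2>n^2\ge 2n$), while its exponent in $n!$ is $0$. Hence $p\mid\mid\binom{2n}{n}$ and $p\in[n,2n]=[n^{1/1},(2n)^{1/1}]$, which is exactly the claim for $r=1$ and every $n\ge 3$.

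For $r\ge 2$ I would invoke Theorem~\ref{thm:EK}. With $(N,M)=(2n,n)$ the hypothesis $2\le M\le N/2$ reads $n\ge 2$, and the admissible range for $r$ becomes $r\le c_1\big((\log n)^3/((\log 2n)^2\log\log 2n)\big)^{1/4}$. Since $\log 2n=\log n+\log 2$ and $\log\log 2n$ differ from $\log n$ and $\log\log n$ only by lower-order factors, this quantity equals $(1+o(1))\,c_1(\log n/\log\log n)^{1/4}$; so I can fix $c_3$ (say $c_3\le c_1/2$) and a threshold $n_0$ so that, for $n\ge n_0$, the corollary's constraint $r\le c_3(\log n/\log\log n)^{1/4}$ implies the constraint of Theorem~\ref{thm:EK}. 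The theorem then produces at least $c_2\,r\,n^{1/r}/(4^r\log n)$ primes $p\in[n^{1/r},(2n)^{1/r}]$ with $p^r\mid\mid\binom{2n}{n}$, so it suffices to check that this count is $\ge 1$.

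The crux, and the step I expect to be the main obstacle, is showing $c_2\,r\,n^{1/r}/(4^r\log n)\ge 1$ uniformly over the whole range $2\le r\le c_3(\log n/\log\log n)^{1/4}$. The point is the shape of the fourth-root bound: it forces $\log n/r\ge c_3^{-1}(\log n)^{3/4}(\log\log n)^{1/4}$ while $r\log 4\le c_3\log 4\,(\log n)^{1/4}(\log\log n)^{-1/4}$, so in the exponent the positive term $\log n/r$ dominates both $r\log 4$ and the polynomial factor $\log n$ as $n\to\infty$. Since $n^{1/r}/4^r=\exp(\log n/r-r\log 4)$ is decreasing in $r$, its minimum over the range is attained at the top endpoint, where $\exp(\log n/r-r\log 4)/\log n\to+\infty$; together with $r\ge 2$ this makes the count exceed $1$ for all $n\ge n_0$ (enlarging $n_0$ if needed). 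Finally, choosing $c_3$ small enough that $r\ge 2$ already forces $n\ge n_0$ reconciles the two cases: whenever some $r\ge 2$ is admissible we are automatically in the regime where Theorem~\ref{thm:EK} applies and yields a prime, while all remaining pairs $(n,r)$ with $n\ge 3$ have $r=1$ and are covered by Bertrand.
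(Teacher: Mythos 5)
Your proposal is correct, and its core coincides with the paper's proof, which consists in its entirety of one sentence: replace $m$ and $n$ by $n$ and $2n$, respectively, in Theorem~\ref{thm:EK}. What you add is the bookkeeping needed to make that one-liner airtight, and the comparison is instructive. Your split into $r=1$ (Bertrand's postulate) and $r\ge 2$ (Erd\H{o}s--Kolesnik above a threshold $n_0$, with $c_3$ shrunk so that any admissible $r\ge 2$ forces $n\ge n_0$) treats the small-$n$ pairs explicitly; the paper instead can absorb them by taking $c_3$ so small that the range $1\le r\le c_3(\log n/\log\log n)^{1/4}$ is empty for every $n$ below its threshold, so the corollary holds vacuously there. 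Both devices are legitimate; yours has the minor advantage that for $r=1$ the conclusion is genuinely established for all $n\ge 3$ rather than holding vacuously, at the cost of importing one extra (standard) tool. One remark on your "crux": the verification that $c_2\,r\,n^{1/r}/(4^r\log n)\ge 1$ is carried out correctly, but under a literal reading of Theorem~\ref{thm:EK} as stated it is redundant, since the number of primes $p$ with $p^r \mid\mid \binom{2n}{n}$ in the given interval is a nonnegative integer bounded below by a strictly positive quantity, hence is at least $1$ whenever the hypotheses hold. It is nevertheless a reasonable robustness check, since such counting statements carry real content only when the lower bound reaches $1$, and your observation that the exponent $\log n/r - r\log 4$ is minimized at the top endpoint of the $r$-range, where it still dominates $\log\log n$, is exactly the right way to see it. In short: same route as the paper, with the edge cases handled explicitly rather than implicitly.
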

\begin{proof}
The claim follows by replacing $m$ and $n$ with $n$ and $2n$, respectively, in Theorem~\ref{thm:EK}.
\end{proof}

\section{Proof of Theorem~\ref{thm:main}}

Fix $C > \max\!\big((5 \log 2)^{-1}, (2 / c_3)^4\big)$, where $c_3$ is the constant of Corollary~\ref{cor:EK}.
Assume that $x$ is sufficiently large and put $E := \exp\!\big(C(\log x)^{4/5} \log \log x\big)$ and $\varepsilon := (\log x)^{-1/5}$.
Let $n \leq x$ be a positive integer and let $v$ be the exponent of $2$ in the prime factorization of $\binom{2n}{n}$.
Since
\begin{equation*}
\frac1{\log2}\, \varepsilon \log (1/\varepsilon) \log x = \frac1{5 \log 2} (\log x)^{4/5} \log \log x < C (\log x)^{4/5} \log \log x ,
\end{equation*}
from Lemma~\ref{lem:v2} and Lemma~\ref{lem:s2} we get that $2^v \leq n^\varepsilon$ for less than $\tfrac1{2}E$ choices of $n$.
Hence, we can assume that $2^v > n^\varepsilon$ and $n > \tfrac1{2}E$, which excludes at most $E$ positive integers not exceeding $x$.
Then, since $n > \tfrac1{2} E$ and $x$ is sufficiently large, we have
\begin{equation*}
\frac{\log n}{\log \log n} > \frac{\log (\tfrac1{2}E)}{\log \log (\tfrac1{2}E)} > C (\log x)^{4/5} > \left(\frac{2(\log x)^{1/5}}{c_3}\right)^4 .
\end{equation*}
Therefore,
\begin{equation*}
r := \left\lfloor c_3 \left(\frac{\log n}{\log \log n}\right)^{1/4}\right\rfloor > \frac1{\varepsilon} .
\end{equation*}
Thanks to Corollary~\ref{cor:EK}, there exists a prime number $p \in {[n^{1/r}, (2n)^{1/r}]}$ such that $p^r$ divides $\binom{2n}{n}$.
Now $2^v$ is a practical number, because all powers of $2$ are practical numbers.
Morever, since
\begin{equation*}
p \leq (2n)^{1/r} < (2n)^\varepsilon < 2^{v + 1} ,
\end{equation*}
from Lemma~\ref{lem:2d} it follows that $2^v p^r$ is a practical number.
Finally, $2^v p^r$ divides $\binom{2n}{n}$, $2^v p^r \geq 2 n$, and all prime factors of $\binom{2n}{n}$ are not exceeding $2n$, hence Lemma~\ref{lem:2d} yields that $\binom{2n}{n}$ is a practical number.
The proof is complete.

\bibliographystyle{amsplain}

\begin{thebibliography}{10}

\bibitem{MR1692284}
P.~Erd\H{o}s and G.~Kolesnik, \emph{Prime power divisors of binomial
  coefficients}, vol. 200, 1999, Paul Erd\H{o}s memorial collection,
  pp.~101--117.

\bibitem{MR580438}
H.~Gupta, \emph{Powers of {$2$} and sums of distinct powers of {$3$}}, Univ.
  Beograd. Publ. Elektrotehn. Fak. Ser. Mat. Fiz. (1978), no.~602-633, 151--158
  (1979).

\bibitem{MR752596}
M.~Hausman and H.~N. Shapiro, \emph{On practical numbers}, Comm. Pure Appl.
  Math. \textbf{37} (1984), no.~5, 705--713.

\bibitem{MR1845703}
R.~E. Kennedy and C.~Cooper, \emph{A generalization of a result by {N}arkiewicz
  concerning large digits of powers}, Univ. Beograd. Publ. Elektrotehn. Fak.
  Ser. Mat. \textbf{11} (2000), 36--40 (2001).

\bibitem{MR1578793}
E.~E. Kummer, \emph{\"{U}ber die {E}rg\"{a}nzungss\"{a}tze zu den allgemeinen
  {R}eciprocit\"{a}tsgesetzen}, J. Reine Angew. Math. \textbf{44} (1852),
  93--146.

\bibitem{MR2506687}
J.~C. Lagarias, \emph{Ternary expansions of powers of 2}, J. Lond. Math. Soc.
  (2) \textbf{79} (2009), no.~3, 562--588.

\bibitem{MR4017943}
P.~Leonetti and C.~Sanna, \emph{Practical numbers among the binomial
  coefficients}, J. Number Theory \textbf{207} (2020), 145--155.

\bibitem{MR1089787}
M.~Margenstern, \emph{Les nombres pratiques: th\'eorie, observations et
  conjectures}, J. Number Theory \textbf{37} (1991), no.~1, 1--36.

\bibitem{MR1452391}
G.~Melfi, \emph{A survey on practical numbers}, vol.~53, 1995, Number theory,
  II (Rome, 1995), pp.~347--359.

\bibitem{MR1370203}
G.~Melfi, \emph{On two conjectures about practical numbers}, J. Number Theory
  \textbf{56} (1996), no.~1, 205--210.

\bibitem{MR623247}
W.~Narkiewicz, \emph{A note on a paper of {H}. {G}upta concerning powers of two
  and three: ``{P}owers of {$2$} and sums of distinct powers of {$3$}''
  [{U}niv. {B}eograd. {P}ubl. {E}lektrotehn. {F}ak. {S}er. {M}at. {F}iz. {N}o.
  602-633 (1978), 151--158 (1979); {MR} 81g:10016]}, Univ. Beograd. Publ.
  Elektrotehn. Fak. Ser. Mat. Fiz. (1980), no.~678-715, 173--174 (1981).

\bibitem{MR1923794}
L.~Pisano, \emph{Fibonacci's {{L}iber {A}baci}}, Sources and Studies in the
  History of Mathematics and Physical Sciences, Springer-Verlag, New York,
  2002, A translation into modern English of Leonardo Pisano's Book of
  calculation, Translated from the Latin and with an introduction, notes and
  bibliography by L. E. Sigler.

\bibitem{MR1430008}
E.~Saias, \emph{Entiers \`a diviseurs denses. {I}}, J. Number Theory
  \textbf{62} (1997), no.~1, 163--191.

\bibitem{MR4003803}
C.~Sanna, \emph{Practical numbers in {L}ucas sequences}, Quaest. Math.
  \textbf{42} (2019), no.~7, 977--983.

\bibitem{MR0027799}
A.~K. Srinivasan, \emph{Practical numbers}, Current Sci. \textbf{17} (1948),
  179--180.

\bibitem{MR860809}
G.~Tenenbaum, \emph{Sur un probl\`eme de crible et ses applications}, Ann. Sci.
  \'Ecole Norm. Sup. (4) \textbf{19} (1986), no.~1, 1--30.

\bibitem{MR3356847}
A.~Weingartner, \emph{Practical numbers and the distribution of divisors}, Q.
  J. Math. \textbf{66} (2015), no.~2, 743--758.

\bibitem{MR4079398}
A.~Weingartner, \emph{The constant factor in the asymptotic for practical numbers},
  Int. J. Number Theory \textbf{16} (2020), no.~3, 629--638.

\end{thebibliography}

\end{document}